 \def\NZQ{\mathbb}               
 \def\ZZ{{\NZQ Z}}
 \def\KK{{\NZQ K}}
 \def\frk{\mathfrak}               
 \def\pp{{\frk p}}
 \def\mm{{\frk m}}
 \def\G{{\mathcal G}}
\def\Cc{{\mathcal C}}
 \def\Oc{{\mathcal O}}
 \def\opn#1#2{\def#1{\operatorname{#2}}} 
 \opn\chara{char} \opn\length{\ell} \opn\pd{pd} \opn\rk{rk}
 \opn\projdim{proj\,dim} \opn\injdim{inj\,dim} \opn\rank{rank}
 \opn\depth{depth} \opn\grade{grade} \opn\height{height}
 \opn\embdim{emb\,dim} \opn\codim{codim}
 \opn\Tr{Tr} \opn\bigrank{big\,rank}
 \opn\superheight{superheight}\opn\lcm{lcm}
 \opn\trdeg{tr\,deg}
 \opn\reg{reg} \opn\lreg{lreg} \opn\ini{in} \opn\lpd{lpd}
 \opn\size{size} \opn\sdepth{sdepth}
 \opn\link{link}\opn\fdepth{fdepth}\opn\lex{lex}
 \opn\tr{tr}
\opn\tp{t} 
 \opn\div{div} \opn\Div{Div} \opn\cl{cl} \opn\Cl{Cl}
 \opn\Spec{Spec} \opn\Supp{Supp} \opn\supp{supp} \opn\Sing{Sing}
 \opn\Ass{Ass} \opn\Min{Min}\opn\Mon{Mon}
 \opn\Ann{Ann} \opn\Rad{Rad} \opn\Soc{Soc}
 \opn\Im{Im} \opn\Ker{Ker} \opn\Coker{Coker} \opn\Am{Am}
 \opn\Hom{Hom} \opn\Tor{Tor} \opn\Ext{Ext} \opn\End{End}
 \opn\Aut{Aut} \opn\id{id}
 \opn\nat{nat}
 \opn\pff{pf}
 \opn\Pf{Pf} \opn\GL{GL} \opn\SL{SL} \opn\mod{mod} \opn\ord{ord}
 \opn\Gin{Gin} \opn\Hilb{Hilb}\opn\sort{sort}
 \opn\PF{PF}\opn\Ap{Ap}
 \opn\aff{aff} \opn
\opn\relint{relint} \opn\st{st}
 \opn\lk{lk} \opn\cn{cn} \opn\core{core} \opn\vol{vol}  \opn\inp{inp} \opn\nilpot{nilpot}
 \opn\link{link} \opn\star{star}\opn\lex{lex}\opn\set{set}
 \opn\width{wd}
 \opn\Fr{F}
 \opn\QF{QF}
 \opn\G{G}
 \opn\type{type}\opn\res{res}
\opn\Stab{Stab}
 \opn\gr{gr}
 \def\pot#1#2{#1[\kern-0.28ex[#2]\kern-0.28ex]}
 \opn\dirlim{\underrightarrow{\lim}}
 \opn\inivlim{\underleftarrow{\lim}}
 \def\Implies{\ifmmode\Longrightarrow \else
         \unskip${}\Longrightarrow{}$\ignorespaces\fi}
 \def\implies{\ifmmode\Rightarrow \else
         \unskip${}\Rightarrow{}$\ignorespaces\fi}
 \def\iff{\ifmmode\Longleftrightarrow \else
         \unskip${}\Longleftrightarrow{}$\ignorespaces\fi}
 \newtheorem{Theorem}{Theorem}[section]
 \newtheorem{Corollary}[Theorem]{Corollary}
 \newtheorem{Proposition}[Theorem]{Proposition}
 \let\epsilon\varepsilon
 \let\kappa=\varkappa
 \def\qed{\ifhmode\textqed\fi
       \ifmmode\ifinner\quad\qedsymbol\else\dispqed\fi\fi}
 \def\textqed{\unskip\nobreak\penalty50
        \hskip2em\hbox{}\nobreak\hfil\qedsymbol
        \parfillskip=0pt \finalhyphendemerits=0}
 \def\dispqed{\rlap{\qquad\qedsymbol}}
 \opn\dis{dis}
 \def\pnt{{\raise0.5mm\hbox{\large\bf.}}}
 \opn\Lex{Lex}
\begin{document}
\title { Stable set rings which are Gorenstein on the punctured spectrum }

\author {Takayuki Hibi and Dumitru I.\ Stamate}

\address{Takayuki Hibi, Department of Pure and Applied Mathematics, Graduate School of Information Science and Technology,
Osaka University, Suita, Osaka 565-0871, Japan}
\email{hibi@math.sci.osaka-u.ac.jp}

\address{Dumitru I. Stamate, Faculty of Mathematics and Computer Science, University of Bucharest, Str. Academiei 14, Bucharest -- 010014, Romania }
\email{dumitru.stamate@fmi.unibuc.ro}

\dedicatory{ }

\begin{abstract}
The non-Gorenstein locus of stable set rings of finite simple perfect graphs is studied.  We describe combinatorially those perfect graphs whose stable set rings are Gorenstein on the punctured spectrum.  In addition, we show that, in general, for Cohen--Macaulay graded algebras, their Cohen--Macaulay type and residue are largely independent.
\end{abstract}

\subjclass[2010]{Primary 13H10,   05E40, 05C17; Secondary 05C69, 14M25,     06A11}
 

\keywords{stable set ring, perfect graph, Gorenstein locus, stable set polytope,  Hibi ring, order polytope, chain polytope}

\maketitle

\section*{Introduction}
Let $\KK$ be any field, and $G$ a finite simple graph with vertex set $V(G)=[n]=:\{1,\dots, n\}$.
 A subset $W\subset V(G)$  is called {\em stable} (or {\em independent}) if $\{i, j\}$ is not an edge in  $G$ for all $i, j\in W$. 
The {\em stable set ring} of $G$ is the toric ring associated to the stable set polytope of $G$, namely it is the  monomial subalgebra in $K[x_1,\dots, x_n, t]$
$$
{\rm Stab}_\KK(G)=\KK[ (\prod_{i\in W}x_i)\cdot t: W \text{ stable set in }G].
$$

The stable set polytope of a finite simple graph had been studied in classical combinatorics.  Especially, when $G$ is perfect, its facets can be easily described (\cite{C}).  (A finite graph G is called {\em perfect} if, for all induced subgraphs $H$ of $G$ including $G$ itself, the chromatic number of $H$ is equal to the maximal degree of complete subgraphs contained in $H$.)

Considering the degree in $t$, $\Stab_\KK(G)$ is a standard graded $\KK$-algebra. Some of its algebraic properties have been investigated.
Assume $G$ is a perfect graph.  
Then by \cite{OH},  $\Stab_\KK (G)$ is normal, hence Cohen--Macaulay.
In \cite{OHjct} it is proved that $\Stab_\KK(G)$ is a Gorenstein ring if and only if all the maximal cliques in $G$ have the same number of elements.  Recall that a set $C\subset V(G)$ is called a {\em clique} if $\{i,j\}$ is an edge in $G$ for all $i<j$ in $C$.
The nearly Gorenstein property and the class group of $\Stab_\KK(G)$ have been recently studied in \cite{HS}, \cite{HM-class}.

Let $R$ be a Cohen-Macaulay  local ring (or standard graded $\KK$-algebra) with maximal (homogeneous) ideal $\mm$.
The {\em non-Gorenstein locus} of $R$ is the set of prime ideals $\pp$ in $R$ such that $R_\pp$ is not a Gorenstein ring. By \cite[Lemma 2.1]{HHS} this locus is the Zariski closed set of primes containing the canonical trace ideal $\tr(\omega_R)$.
Naturally, the height of the latter  indicates how far $R$  is from a Gorenstein ring. For some classes of toric rings the non-Gorenstein locus has been studied in \cite{HMP}, \cite{Miy-P}.

One says that $R$ is {\em Gorenstein on the punctured spectrum} if $R_\pp$ is Gorenstein for all $\pp \in \Spec(R)\setminus \{\mm\}$, equivalently that $\tr(\omega_R)$ is an $\mm$-primary ideal, or that $\height(\tr(\omega_R))=\dim R$.

The aim of the present paper is to provide in Theorem~\ref{stab-main} a simple combinatorial description of the perfect graphs $G$ such that the ring $R=\Stab_\KK(G)$ is Gorenstein on the punctured spectrum. It turns out that this situation is also equivalent to $\tr(\omega_R)$ being a power of the graded maximal ideal in $R$. Similar characterizations have been obtained by Herzog, Mohammadi and Page in \cite{HMP} for the non-Gorenstein locus of Hibi rings. 
Also, modeling \cite[Corollary 3.12]{HMP} and using simple techniques on combinatorics on finite posets, we construct, for any integers $4\leq a< b$, a finite simple connected perfect graph $G$ such that $\height (\tr(\omega_R)) =a$ and $\dim R= b$ (Proposition \ref{ex:ab}).

On the other hand, we study numerical invariants related to the  Gorenstein property.  Let $R$ be a Cohen--Macaulay positively graded $\KK$-algebra.  It is known that $R$ is Gorenstein if and only if its Cohen--Macaulay type $\tp(R)=\dim_\KK \Ext_R^{\dim R} (\KK, R)$ is equal to $1$ and if and only if its residue $\res(R)=\ell_R(R/\tr(\omega_R))$ is equal to $0$.  We show, however, that
when $R$ is not Gorenstein, these two invariants are largely independent (Proposition \ref{type-res}).

\section{Stable set rings and the non-Gorenstein locus}

 We first recall some terminology and results on stable set rings. Let $G$ be a finite simple graph with vertex set $[n]$. The {\em clique complex} of $G$ is the simplicial complex $\Delta(G)$ whose faces are the  cliques in $G$. Note that $\{i\}$ is a stable set and a clique for all $i\in [n]$.
 One says that $G$ is {\em pure} if $\Delta(G)$ is a pure simplicial complex, i.e. all  maximal cliques have the same number of elements. The graph $G$ is called {\em perfect} if for any induced graph $H$ of $G$, including $G$, the chromatic number equals the maximal cardinality of cliques contained in $H$. When $G$ is perfect, $R=\Stab_\KK(G)$ is the $\KK$-span of the monomials $(\prod_{i=1}^n x_i^{a_i})t^q$ with $\sum_{i\in C} a_i \leq q$ for each  maximal clique $C$  of $G$. Moreover, the canonical module $\omega_{\Stab_\KK (G)}$ is the $\KK$-span of those monomials $(\prod_{i=1}^n x_i^{a_i})t^q$ where $a_i>0$ for all $i$, and $\sum_{i\in C}  a_i \leq q-1$ for each  maximal clique $C$  of $G$.
The $a$-invariant of $R$ is by definition $a(R)=-\min \{i: (\omega_R)_i \neq 0\}$, and when $G$ is a perfect graph $a(\Stab_\KK(G))=-\dim \Delta(G) -2$ by \cite[Corollary 12]{HS}. Since $R$ is an integral domain, by \cite[Lemma 1.1]{HHS} the trace of its canonical ideal can be computed as $\tr (\omega_R)=\omega_R\cdot \omega_R^{-1}$, where $\omega_R^{-1}=\{x \in Q(R):x \omega_R\subseteq R\}$ is the anticanonical ideal of $R$ and the field of fractions of $R$ is $Q(R)=\KK(x_1,\dots, x_n, t)$, see the proof of \cite[Lemma 11]{HS}.

 We recall that the Segre product of  the graded $\KK$-algebras $S=\oplus_{i\geq 0}S_i$ and $T=\oplus_{i\geq 0} T_i$ is the graded ring $S\# T=\oplus_{i\geq 0} (S_i\otimes_\KK T_i)$. This construction is relevant for us since if the  graph $G$ has the connected components $G_1,\dots, G_s$,  then
$$\Stab_\KK(G) \cong \Stab_\KK(G_1)\# \cdots  \# \Stab_\KK (G_s).$$
 
 We refer to \cite{BH}, \cite{BG} for more background on Gorenstein and Cohen-Macaulay toric algebras, to \cite{BB} for the undefined terminology in graph theory and to \cite{TH87} for results on Hibi rings.

\begin{Theorem}\label{stab-main}
Let $G$ be a finite simple pefect graph with connected components $G_1,\dots, G_s$ such that  $d_i=\dim \Delta(G_i)$ with $i=1,\dots, s$ satisfy $d_1\geq \dots \geq d_s$. 
We denote $R=\Stab_\KK(G)$ and $\mm$ the graded maximal ideal in $R$.
The following statements are equivalent:
\begin{enumerate}
\item[(i)] $G_i$ is a pure graph for $i=1,\dots, s$;
\item[(ii)] $\tr(\omega_R)=\mm^{d_1-d_s}$;
\item[(iii)]  $\tr(\omega_R)=\mm^N$ for some integer  $N\geq 0$;
\item[(iv)] $\tr(\omega_R) \supseteq \mm^\ell$ for some $\ell >0$;
\item[(v)] $\tr(\omega_R)$ is an $\mm$-primary ideal.
\end{enumerate}
\end{Theorem}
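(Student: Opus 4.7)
The plan is to establish the chain (ii) $\Rightarrow$ (iii) $\Rightarrow$ (iv) $\Rightarrow$ (v) $\Rightarrow$ (i) $\Rightarrow$ (ii). The first three are formal: (ii) is (iii) with $N = d_1 - d_s$; (iii) gives (iv) with $\ell = \max\{N, 1\}$ (using $\mm^0 = R$); and (iv) forces $V(\tr(\omega_R)) \subseteq V(\mm^\ell) = \{\mm\}$, which is (v). Both nontrivial implications rely on a description of the anticanonical ideal parallel to that of $\omega_R$ recalled in the excerpt: using that the maximal cliques of $G$ coincide with those of its connected components and that for each maximal clique $C$ the value $\sum_{v\in C} a_v = q - 1$ is realized by some element of $(\omega_R)_q$ for all $q \gg 0$ (a short combinatorial argument distributing surplus exponents across $C$), one verifies that $\omega_R^{-1} \subseteq Q(R) = \KK(x_1,\dots,x_n,t)$ is precisely the $\KK$-span of monomials $\prod_v x_v^{b_v} t^p$ with $b_v \geq -1$ for all $v$ and $\sum_{v\in C} b_v \leq p + 1$ for every maximal clique $C$ of $G$.

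For (i) $\Rightarrow$ (ii), purity combined with \cite{OHjct} gives that each $R_j = \Stab_\KK(G_j)$ is Gorenstein with $\omega_{R_j} = R_j \cdot f_j$, $f_j = \bigl(\prod_{v \in V(G_j)} x_v\bigr) t^{d_j + 2}$. I verify the claim $\tr(\omega_R) = \mm^{d_1 - d_s}$ at the monomial level. The containment $\tr(\omega_R) \subseteq \mm^{d_1 - d_s}$ is a $t$-degree bound: any product $vu$ of $v \in \omega_R$ and $u \in \omega_R^{-1}$ satisfies $q + p \geq (d_1 + 2) + (-(d_s + 2)) = d_1 - d_s$, the lower bound on $q$ coming from $v \in \omega_R$ via a maximal clique of $G_1$ of size $d_1 + 1$, and the lower bound on $p$ from summing $b_v \geq -1$ over the $d_s + 1$ vertices of a maximal clique of $G_s$. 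The reverse containment $\mm^{d_1 - d_s} \subseteq \tr(\omega_R)$ is proved by constructing, for each monomial $m = \prod_v x_v^{c_v} t^r \in R$ with $r \geq d_1 - d_s$, an explicit factorization $m = v \cdot u$ with $v \in (\omega_R)_{d_1 + 2}$ taken by setting $a_v = 1$ on $V(G_1)$ (forced by purity of $G_1$) and distributing an appropriate surplus on each $V(G_j)$, $j \geq 2$, so that $u = m/v$ satisfies the anticanonical inequalities.

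For (v) $\Rightarrow$ (i), I argue contrapositively. If some $G_j$ is not pure, connectivity of $G_j$ transfers to connectivity of its clique graph (consecutive edges along any vertex-path of $G_j$ lie in maximal cliques sharing their common endpoint), so some vertex $v_0 \in V(G_j)$ lies in two maximal cliques $C, C'$ with $|C| \neq |C'|$. Then $(x_{v_0} t)^k \notin \tr(\omega_R)$ for every $k \geq 1$: writing $x_{v_0}^k t^k = vu$ monomially with $v = \prod_{v'} x_{v'}^{a_{v'}} t^q \in \omega_R$ and $u \in \omega_R^{-1}$, the inequalities $a_{v'} \geq 1$ and $b_{v'} = -a_{v'} \geq -1$ for $v' \neq v_0$ force $a_{v'} = 1$ there, whereupon the maximal-clique inequalities for $C$ (resp.\ $C'$) in both $\omega_R$ and $\omega_R^{-1}$ simultaneously pin $a_{v_0} = q - |C|$ (resp.\ $q - |C'|$), which is impossible since $|C| \neq |C'|$. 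Hence $x_{v_0} t \notin \sqrt{\tr(\omega_R)}$ and $\tr(\omega_R)$ is not $\mm$-primary.

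The main obstacle is the reverse containment $\mm^{d_1 - d_s} \subseteq \tr(\omega_R)$: producing anticanonical witnesses $u$ with $t$-exponent as low as $-(d_s + 2)$ by borrowing $b_v = -1$ deficits along a maximal clique of $G_s$, while maintaining compatibility of $u$ with the maximal-clique inequalities on every component. Purity of each $G_j$ is precisely what makes these extremal configurations simultaneously realizable.
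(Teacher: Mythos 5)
Your overall architecture is sound and in places genuinely different from the paper's. Your explicit description of $\omega_R^{-1}$ (monomials with $b_v\geq -1$ and $\sum_{v\in C}b_v\leq p+1$ for every maximal clique $C$) is correct, and your (v)$\Rightarrow$(i) built on it -- two maximal cliques of different sizes sharing a vertex $v_0$, forcing the contradictory pins $a_{v_0}=q-|C|=q-|C'|$ -- is a clean alternative to the paper's argument, which instead chooses an edge $\{i_0,j_0\}$ with $i_0$ in a maximum clique of $G_i$ and $j_0$ in none, and chases a chain of inequalities using ad hoc test elements $z$, $z_1$, $z_1x_{j_0}$ of $\omega_R$ rather than a full description of $\omega_R^{-1}$. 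Do note, however, that your description of $\omega_R^{-1}$ is load-bearing for that pinning argument (any slack in $\sum_{v\in C}b_v\leq p+1$ would destroy the forced equalities), so the parenthetical ``tightness at $C$ is realizable for $q\gg 0$'' must actually be proved; it is true (e.g.\ distribute the surplus $q-1-|C|$ nearly evenly over $C$ and use $|C\cap C''|\leq |C|-1$ for every other maximal clique $C''$), but it is not automatic and should be written out.

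The genuine gap is the reverse containment $\mm^{d_1-d_s}\subseteq\tr(\omega_R)$ in (i)$\Rightarrow$(ii). ``Distributing an appropriate surplus on each $V(G_j)$'' is not a routine step: writing $a_v=1+e_v$, the requirement that $v\in\omega_R$ and $u=m/v\in\omega_R^{-1}$ translates into finding $0\leq e_v\leq c_v$ with $\sum_{v\in C}e_v\leq d_1-d_j$ \emph{and} $\sum_{v\in C}(c_v-e_v)\leq r-(d_1-d_j)$ for every maximal clique $C$ of $G_j$, and the correct choice depends on $m$ (compare $m=t$ versus $m=x_3t$ for $G$ the disjoint union of an edge and a point: the former needs $a_3=1$, the latter $a_3=2$). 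The simultaneous feasibility of these constraints is exactly the integer decomposition property of the stable set polytopes of the components $G_j$ (normality, via \cite{OH}), i.e.\ it is essentially the content of the Segre-product trace formula \cite[Theorem 2.7]{HMP} (or \cite[Theorem 4.15]{HHS}) that the paper invokes at this point after observing $\Stab_\KK(G)\cong R_1\#\cdots\# R_s$ with each $R_i$ Gorenstein of $a$-invariant $-d_i-2$. So either cite that theorem, as the paper does, or explicitly invoke IDP and prove the decomposition; as written, this step is asserted rather than established.
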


\begin{proof}
Say $V(G)=\{1,\dots, n\}$.
(i)\implies (ii):  Since $G_i$ is pure, the ring $R_i=\Stab_\KK(G_i)$ is a Gorenstein $\KK$-algebra (by \cite[Theorem 2.1 (b)]{OHjct}) with $a$-invariant equal to $-d_i-2$ (by \cite[Corollary 12]{HS}).
Since $R$ is graded isomorphic to the Segre product of algebras $R_1\#\cdots \# R_s$ and $R$ is a Cohen-Macaulay ring, by \cite[Theorem 2.7]{HMP} (or \cite[Theorem 4.15]{HHS}) we obtain that  $\tr(\omega_R)=\mm^{d_1-d_s}$.

The implications (ii)\implies (iii) \implies (iv) and the equivalence (iv) \iff (v)  are clear. We now prove (iv)\implies (i).

Let $\ell >0$ such that  $\tr(\omega_R) \supseteq \mm^\ell$.
Assume, by contradiction, that the graph $G_i$ is not pure for some $i$. We denote by $\delta$ and $\delta_1$ the maximum size of a clique in $G$, respectively in $G_i$.
Since $G_i$ is not pure, there exists an edge $\{i_0, j_0\}$ in $G_i$ such that $i_0$ belongs to a clique $C$ with $\delta_1$ elements, and $j_0$ belongs to no clique in $G$ with $\delta_1$ elements.

As $\{x_{i_0}\}$ is a stable set in  $G$, one has $x_{i_0}t \in \mm_R$. Then
\begin{equation}\label{firstprod}
 x_{i_0}^\ell t^\ell =w \cdot w',
\end{equation}
 where 
$w=\prod_{j=1}^n x_j^{a_j}\cdot t^q \in \omega_R$ and $w'=\prod_{j=1}^n  x_j^{a'_j} \cdot t^{q'} \in \omega_R^{-1}$.

  The monomial $z=x_1\dots x_n t^{\delta+1}$  satisfies the equations describing the monomial $\KK$-basis of $\omega_R$, hence $z$  is in $\omega_R$. Arguing similarly, it follows that any monomial in $\omega_R$ is divisible by $z$.
Since $zw' \in R$, we get that $a'_1,\dots, a'_n \geq -1$ and $q'\geq -\delta-1$.  Also, by considering the equations of $\omega_R$ one verifies that $x_{j_0}z\in \omega_R$, hence $x_{j_0}zw' \in R$. Note that $\deg_{x_{j_0}} (x_{j_0}zw') \geq 1$, therefore $q'+\delta+1 >0$, equivalently $q'\geq -\delta$.

For $j\neq i_0$, since $a_j \geq 1$, $a'_j\geq -1$ and $a_j+a'_j=0$ we get that $a_j=1$ and $a'_j=-1$. Consequently, \eqref{firstprod} becomes
\begin{equation*}
x_{i_0}^\ell t^\ell=
 (x_1 \cdots x_{i_0}^u \cdots x_n t^{\delta+a})\cdot (x_1^{-1}\cdots x_{i_0}^v\cdots x_n^{-1}t^{-\delta+b})
\end{equation*}
for some integers $u\geq 1$, $v\geq -1$, $a\geq 1$ and $b\geq 0$ such that $u+v=a+b=\ell$.

Set $z_1=z\cdot x_{i_0}^{\delta-\delta_1}$. By our choice of $i_0$ and $j_0$, it follows that $z_1$ and $z_1\cdot x_{j_0}$ are in $\omega_R$. Then the monomial $z_1 x_{j_0}w'=x_{i_0}^{v+1+\delta-\delta_1} \cdot x_{j_0}\cdot t^{b+1} \in R$.
 
As $w \in \omega_R$ and $i_0$ belongs to a clique with $\delta_1$ elements, one gets $u+\delta_1-1 \leq \delta+a-1$, i.e.
\begin{equation}
\label{one}
u \leq \delta-\delta_1+a.
\end{equation}

Let $C$ be a maximal clique in $G$ containing $i_0$, and hence $j_0\notin C$. Since $z_1x_{j_0}w'$ is in $R$ we infer that $v+1+\delta-\delta_1 \leq b+1$, i.e.
\begin{equation}
\label{second}
v\leq  b- (\delta-\delta_1).
\end{equation}
Adding \eqref{one} and \eqref{second} we get that $\ell=u+v \leq a+b=\ell$. Thus \eqref{one} and \eqref{second} are both equalities. This gives $z_1x_{j_0}w'=x_{i_0}^{v+1+\delta-\delta_1}x_{j_0}t^{b+1}=x_{i_0}^{b+1}x_{j_0}t^{b+1}$.

On the other hand,  since $\{i_0, j_0\}\in E(G)$ there exists a maximal clique in $G$ containing $i_0$ and $j_0$. The corresponding inequality for  $z_1x_{j_0}w' \in R$ becomes $b+2\leq b+1$, a contradiction.
This finishes the proof of the theorem.
\end{proof}

According to \cite{HHS}, a standard graded Cohen-Macaulay $\KK$-algebra $R$  with canonical module $\omega_R$ is called {\em nearly Gorenstein} if $\tr(\omega_R) \supseteq \mm$, where $\mm$ denotes the graded maximal ideal of $R$. The case $\tr(\omega_R)=R$ corresponds to $R$ being Gorenstein.

As a corollary of Theorem~\ref{stab-main}, we characterize when is $\Stab_\KK(G)$ a nearly Gorenstein ring, recovering a result in \cite{HS}.

\begin{Corollary} (\cite[Theorem 13]{HS}) In the setting of Theorem \ref{stab-main}, the ring $\Stab_\KK (G)$ is nearly Gorenstein if and only if each $G_i$ is pure and $d_s\leq d_1 \leq d_s+1$.
\end{Corollary}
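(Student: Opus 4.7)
The plan is to deduce this directly from Theorem~\ref{stab-main} by unpacking what ``nearly Gorenstein'' means in terms of powers of the graded maximal ideal.

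First I would handle the forward direction. Suppose $\Stab_\KK(G)$ is nearly Gorenstein, so $\tr(\omega_R) \supseteq \mm$. This is exactly condition (iv) of Theorem~\ref{stab-main} with $\ell = 1$, so the theorem yields that every connected component $G_i$ is pure and moreover $\tr(\omega_R) = \mm^{d_1 - d_s}$. From $\mm^{d_1 - d_s} \supseteq \mm$ I would conclude $d_1 - d_s \leq 1$: indeed, if $d_1 - d_s \geq 2$ then $\mm^{d_1-d_s} \subseteq \mm^2$, and $\mm^2 \not\supseteq \mm$ since $R$ is standard graded and $\mm \neq 0$ (there is at least one stable set, so $R$ has positive-degree generators that are not in $\mm^2$).

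For the converse, assume each $G_i$ is pure and $d_1 \leq d_s + 1$. By the implication (i) $\Rightarrow$ (ii) of Theorem~\ref{stab-main}, $\tr(\omega_R) = \mm^{d_1 - d_s}$. Since $d_1 - d_s \in \{0,1\}$, this ideal is either $R$ or $\mm$, both of which contain $\mm$, so $R$ is nearly Gorenstein.

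The argument is essentially a corollary in the strict sense: both directions are one-line consequences of Theorem~\ref{stab-main} once one observes the elementary fact that $\mm^N \supseteq \mm$ in a standard graded $\KK$-algebra happens precisely when $N \leq 1$. The only minor subtlety worth stating explicitly is the latter fact, ruling out $\mm^2 \supseteq \mm$, which I would justify by a degree count on generators.
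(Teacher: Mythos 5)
Your proof is correct and follows exactly the route the paper intends: the paper presents this as an immediate consequence of Theorem~\ref{stab-main}, namely that nearly Gorenstein is condition (iv) with $\ell=1$, forcing purity of each $G_i$ and $\tr(\omega_R)=\mm^{d_1-d_s}$, which contains $\mm$ precisely when $d_1-d_s\leq 1$. Your explicit justification that $\mm^N\supseteq\mm$ fails for $N\geq 2$ is a sensible detail to record, but the argument is the same.
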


In \cite[Corollary 3.12]{HMP}, a family of Hibi rings whose non-Gorenstein locus has arbitrarily large dimension is constructed.  Simple techniques on combinatorics of finite posets yield a family of stable set rings whose non-Gorenstein locus has arbitrarily large dimension.

Let $P$ be a finite poset. We denote by $\KK[\Oc(P)]$ and $\KK[\Cc(P)]$ the toric ring of the order polytope, respectively of the chain polytope of $P$, introduced in \cite{Stanley-2poset}. Note that  $\KK[\Oc(P)]$ is the Hibi ring of $P$. It follows from \cite[Theorem 2.1]{HL_2} 
that $\Oc(P)$ and $\Cc(P)$ are unimodularly equivalent (hence their toric rings are isomorphic) when  $P$ possesses no ``$X$" subposet, i.e. no $5$-element subset $$Q_{a,b}^{y,z}(x) = \{a, b, x, y, z\}$$ with $a < x, b < x, x < y, x < z$, where $a$ and $b$ are incomparable in $P$ and where $y$ and $z$ are incomparable in $P$.  
Let $G(P)$ denote the comparability graph 
 of $P$  the vertex set of $G(P)$ is $P$ and the edges correspond to pairs of distinct comparable elements in the poset.  Every comparability graph is perfect.  
The  stable sets in $G(P)$ are the subsets of $P$ where no two elements are comparable (i.e. the antichains), hence the stable polytope of $G(P)$ has the same set of vertices as  the chain polytope $\Cc(P)$ (\cite{Stanley-2poset}).  Thus their toric rings coincide  ${\rm Stab}_\KK(G(P)) = \KK[\Cc(P)]$.  

Consider integers $4\leq a <b$.
Let $P = P_1 \bigoplus {\overline P_2}$ be the finite poset which is discussed in the proof of \cite[Corollary 3.12]{HMP}. 
Namely, $P_1$ is a chain with $b-a-1$ elements, $P_2$ is the union of a singleton and a chain with $a-2$ elements, $\overline{P_2}$ is obtained by adding a minimum element to $P_2$, and $P_1\bigoplus{\overline P_2} $ denotes the ordinal sum of $P_1$ and 
$\overline{P_2}$ where every element in $P_1$ is smaller than any element in $\overline{P_2}$. 
 Then $P$ has no ``$X$" subposet, which implies $\KK[\Oc(P)] = \KK[\Cc(P)]$. 
According to \cite[Corollary 3.12]{HMP}, one has $\height(\tr(\omega_{\KK[\Oc(P)]}))=a$ and $\dim  {\KK[\Oc(P)]}=b$.  
Hence,  the next result follows.

\begin{Proposition}\label{ex:ab}
Given integers $a$ and $b$ with $4 \leq a < b$, there is a finite connected perfect graph $G$ for which $\height(\tr(\omega_{{\rm Stab}_\KK(G)})) = a$ and $\dim {\rm Stab}_\KK(G) = b$.   
\end{Proposition}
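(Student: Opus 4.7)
The plan is to take $G = G(P)$, the comparability graph of the poset $P = P_1 \oplus \overline{P_2}$ already introduced in the preceding discussion. The proposition then follows from three facts: that $G$ is a finite connected perfect graph, that $\Stab_\KK(G) \cong \KK[\Oc(P)]$, and that the two invariants on the right-hand side have been computed for this particular $P$ in \cite[Corollary 3.12]{HMP}.

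First I would verify the qualitative hypotheses on $G$. Every comparability graph is perfect, as recalled in the excerpt. For connectedness, when $P_1$ is nonempty (i.e.\ $b \geq a + 2$) the ordinal sum structure makes every element of $P_1$ comparable to every element of $\overline{P_2}$, producing a complete bipartite pattern of edges between the two blocks of $G$; otherwise $b = a + 1$, $P = \overline{P_2}$, and the adjoined minimum of $\overline{P_2}$ is comparable to every other element. In either case $G$ is connected.

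Next I would identify $\Stab_\KK(G)$ with a Hibi ring. Since the stable sets of $G(P)$ are precisely the antichains of $P$, the vertices of the stable set polytope of $G(P)$ coincide with those of $\Cc(P)$, so $\Stab_\KK(G) = \KK[\Cc(P)]$. The key structural check is that $P$ admits no ``X'' subposet $Q_{a,b}^{y,z}(x)$. This follows from the stronger observation that in our $P$ the set of elements strictly below any given element is totally ordered: within $P_1$ this is clear as $P_1$ is a chain; the adjoined minimum of $\overline{P_2}$ has all of $P_1$ beneath it; the singleton of $P_2$ sits above only $P_1$ together with the adjoined minimum; and each element of the chain portion of $P_2$ has beneath it $P_1$, the adjoined minimum, and a prefix of that chain, which are pairwise comparable. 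Hence no element of $P$ has two incomparable elements below it, so no ``X'' can form. By \cite[Theorem 2.1]{HL_2} the polytopes $\Oc(P)$ and $\Cc(P)$ are unimodularly equivalent, giving $\Stab_\KK(G) \cong \KK[\Oc(P)]$.

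Finally, \cite[Corollary 3.12]{HMP} computes $\height(\tr(\omega_{\KK[\Oc(P)]})) = a$ and $\dim \KK[\Oc(P)] = b$ for exactly this poset, concluding the argument. The only step demanding genuine attention is the verification that $P$ has no ``X'' subposet, and even that reduces to the one-line down-set observation above; everything else is either a direct appeal to a cited result or a short remark on the ordinal sum structure.
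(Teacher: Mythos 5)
Your proposal is correct and takes essentially the same route as the paper: both pass to the comparability graph of the poset $P = P_1 \oplus \overline{P_2}$ from \cite[Corollary 3.12]{HMP}, identify $\Stab_\KK(G(P)) = \KK[\Cc(P)] \cong \KK[\Oc(P)]$ via the absence of an ``X'' subposet and \cite[Theorem 2.1]{HL_2}, and then quote the height and dimension computation from \cite{HMP}. Your explicit verifications of connectedness and of the no-``X'' condition (via the observation that every principal down-set of $P$ is a chain) are correct details that the paper leaves implicit.
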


\section{Cohen--Macaulay type and residue}
We close the present paper with making a comment about two numerical invariants related to the Gorenstein property. Let $R$ be a Cohen--Macaulay (positively) graded $\KK$-algebra. Its {\em Cohen--Macaulay type} is $$\tp(R)=\dim_\KK \Ext_R^{\dim R} (\KK, R)$$  (see \cite[Definition 1.2.15]{BH}) and its {\em residue} is the length $$\res(R)=\ell_R(R/\tr(\omega_R))$$ (see \cite{HHS}).
It is known that $R$ is a Gorenstein ring if and only if $\tp(R)=1$ if and only if $\res(R)=0$.
However, when $R$ is not Gorenstein, these two invariants are largely independent.

\begin{Proposition}\label{type-res}
Given the integers $a\geq 2$ and $b \geq 1$ there exists a Cohen--Macaulay $\KK$-algebra with Cohen--Macaulay type $a$ and residue $b$. 
\end{Proposition}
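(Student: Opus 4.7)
The plan is to exhibit an explicit Artinian (and therefore Cohen--Macaulay) standard graded $\KK$-algebra with the prescribed invariants. Given integers $a \geq 2$ and $b \geq 1$, consider
$$R = \KK[x_1, \ldots, x_a, t] / \bigl((x_1, \ldots, x_a)^2 + (t^b)\bigr),$$
which factors as $R = A \otimes_\KK B$, where $A = \KK[x_1, \ldots, x_a]/(x_1, \ldots, x_a)^2$ is a graded level algebra and $B = \KK[t]/(t^b)$ is a graded Gorenstein Artinian ring of length $b$.

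First, I would compute $\tp(R) = a$ directly. The monomials $\{t^k\} \cup \{x_i t^k\}$, for $0 \leq k \leq b-1$ and $1 \leq i \leq a$, form a $\KK$-basis of $R$, and one checks that $\Soc(R)$ has $\KK$-basis $\{x_1 t^{b-1}, \ldots, x_a t^{b-1}\}$, of dimension $a$.

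Second, I would establish $\tr(\omega_A) = \mm_A$, where $\mm_A = (x_1, \ldots, x_a)$. The Matlis dual $\omega_A = \Hom_\KK(A, \KK)$ has dual basis $\{1^*, x_1^*, \ldots, x_a^*\}$, with $A$-action $x_j \cdot 1^* = 0$ and $x_j \cdot x_i^* = \delta_{ij} \cdot 1^*$. Any $A$-linear map $\varphi \colon \omega_A \to A$ is determined by the values $\varphi(x_i^*)$, and the constraint $x_j \cdot x_i^* = 0$ for some $j \neq i$ (which exists since $a \geq 2$) forces $\varphi(x_i^*) \in \mm_A$; the remaining relations are then automatic since $\mm_A^2 = 0$. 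Hence every image is contained in $\mm_A$, and the reverse inclusion follows by exhibiting, for each $i$, a $\varphi$ with $\varphi(x_i^*) = x_i$ and $\varphi(x_j^*) = 0$ for $j \neq i$.

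Finally, since $B$ is Gorenstein and $\KK$-flat, and $\omega_A$ is finitely presented over $A$, standard base-change arguments give $\omega_R \cong \omega_A \otimes_\KK B$ and $\Hom_R(\omega_R, R) \cong \Hom_A(\omega_A, A) \otimes_\KK B$. Tracking the evaluation map $\omega_R \otimes_R \Hom_R(\omega_R, R) \to R$ through this decomposition yields $\tr(\omega_R) = \tr(\omega_A) \otimes_\KK B = \mm_A \otimes_\KK B$, so
$$R/\tr(\omega_R) \cong (A/\mm_A) \otimes_\KK B \cong B$$
has length $b$, and therefore $\res(R) = b$. The main obstacle is the identification $\tr(\omega_R) = \tr(\omega_A) \otimes_\KK B$: one must carefully verify that the trace computation is compatible with tensoring by the Gorenstein factor $B$, which requires using flatness of $B$ over $\KK$ to commute base change with the $\Hom$ in the definition of the trace ideal.
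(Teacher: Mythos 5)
Your construction is correct, but it is genuinely different from the one in the paper. The paper takes $R$ to be the one-dimensional numerical semigroup ring $\KK[t^{a+1}, t^{b(a+1)+1}, \dots, t^{b(a+1)+a}]$, reads off $\res(R)=b$ from a computation in \cite{HHS-residue}, and gets $\tp(R)=a$ by counting the pseudo-Frobenius numbers of the semigroup. You instead use the Artinian algebra $A\otimes_\KK B$ with $A=\KK[x_1,\dots,x_a]/\mm_A^2$ and $B=\KK[t]/(t^b)$: the type comes from $\Soc(A\otimes_\KK B)=\Soc(A)\otimes_\KK\Soc(B)$, and the residue from the base-change identity $\tr_R(M\otimes_A R)=\tr_A(M)R$ for $M$ finitely presented and $A\to R$ flat (here even free), which reduces everything to the elementary computation $\tr(\omega_A)=\mm_A$. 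All the steps check out; note that the inclusion $\tr(\omega_A)\subseteq\mm_A$ also follows at once from $A$ not being Gorenstein, so your explicit verification of that half is not strictly needed. What each approach buys: the paper's example is an integral domain of positive dimension, in keeping with the toric theme of the article, whereas yours is zero-dimensional and far from reduced, but it is \emph{standard graded} --- which is precisely the property the authors single out in their closing remark as not achievable by their semigroup example and as being ``of interest'' to realize. You should make that point explicitly, and you may want to state the base-change lemma for trace ideals with a precise reference (or a short proof), since it is the only step of your argument that is not a direct computation.
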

  
\begin{proof}
The algebra $$R=\KK[t^{a+1}, t^{b(a+1)+1}, t^{b(a+2)+2}, \dots, t^{b(a+1)+a}] \subset \KK[t]$$ is the semigroup ring associated to the numerical semigroup $$H=\langle a+1, b(a+1)+1, b(a+1)+2,\dots, b(a+1)+a \rangle,$$ and it is Cohen--Macaulay of dimension one.  More precisely,  $H=\{i(a+1): 0\leq i <b\} \cup \{c, c+1,\dots \}$, where $c=b(a+1)$ is the conductor of  the semigroup $H$.
A computation made in \cite[Example 2.5]{HHS-residue} gives  $\res(R)=b$. The Cohen--Macaulay type of $R$ equals the cardinality of the set of pseudo-Frobenius numbers in $H$, which is $$\PF(H)=\{x\in \ZZ\setminus H: x+h\in H \text{ for all } 0\neq h\in H\},$$ see \cite[Section 3]{S-survey}.  
From the description of $H$,  arguing modulo $a+1$ it follows  that $$\PF(H)=\{(b-1)(a+1)+1,\dots, (b-1)(a+1)+a\}.$$ Hence $\tp(R)=|PF(H)|=a$, as desired.
\end{proof}

The semigroup ring discussed above cannot be a standard graded $\KK$-algebra.  It would, of course, be of interest to find a standard graded $\KK$-algebra $R$ with  $\tp(R)=a$ and $\res(R)=b$.

\medskip

\noindent{\bf Acknowledgement.}  The first author was partially supported by JSPS KAKENHI 19H00637.

\medskip

{}
\end{document}